\documentclass
{amsart}

\usepackage{graphicx}
\usepackage{pict2e}
\usepackage{amssymb}
\usepackage{amsthm}                
\usepackage[margin=1in]{geometry}  
\usepackage{epstopdf}
\usepackage{amsmath}
\usepackage{subfigure}
\usepackage{latexsym}
\usepackage{amsfonts}
\usepackage{amssymb}
\usepackage{tikz}
\usepackage{mathdots}
\usepackage{comment}
\usepackage{float}
\usepackage[mathscr]{euscript}
\usepackage{enumerate}
\usepackage{epsfig}
\usepackage { hyperref }
\usepackage { graphicx }
\usepackage{xparse}
\usepackage[utf8]{inputenc}
\usepackage{longtable}

\usepackage[lite]{amsrefs}

\theoremstyle{definition}
\newtheorem{theorem}{Theorem}[section]
\newtheorem{lemma}[theorem]{Lemma}

\newtheorem{proposition}[theorem]{Proposition}

\theoremstyle{definition}
\newtheorem{definition}[theorem]{Definition}
\newtheorem{example}[theorem]{Example}
\theoremstyle{remark}

\numberwithin{equation}{section}

\bibliographystyle{alphanum} 

\numberwithin{equation}{section}


\begin{document}

\title{ Singular Knots and Involutive Quandles}

\author{Indu R. U. Churchill}
\address{Department of Mathematics, 
	University of South Florida, Tampa, FL 33620, U.S.A.}
\email{udyanganiesi@mail.usf.edu}

\author{M. Elhamdadi}
\address{Department of Mathematics, University of South Florida, 
Tampa, FL 33647 USA}
\email{emohamed@math.usf.edu }

\author{M. Hajij}
\address{Department of Mathematics, University of South Florida, 
Tampa, FL 33647 USA}
\email{mhajij@usf.edu}

\author{Sam Nelson}
\address{Department of Mathematics, 
	Claremont McKenna College,
	850 Columbia Ave.
	Claremont, CA 91711, USA}
\email{Sam.Nelson@claremontmckenna.edu}





\begin{abstract}
The aim of this paper is to define certain algebraic structures coming from generalized Reidemeister moves of singular knot theory.  We give examples, show that the set of colorings by these algebraic structures is an invariant of singular links.  As an application we distinguish several singular knots and links.  
\end{abstract}

 \maketitle

 \tableofcontents
\section{Introduction}

The study of singular knots and their invariants was motivated mainly by the theory of Vassiliev invariants \cite{Vassiliev}.  Most of the important knot invariants have been extended to singular knot invariants. Fiedler extended the Kauffman state models of the Jones and Alexander polynomials to the context of singular knots \cite{Fiedler}. Juyumaya and Lambropoulou constructed a Jones-type invariant for singular links using a Markov trace on a variation of the Hecke algebra \cite{JL}. In \cite{KV} Kauffman and Vogel defined a polynomial invariant of embedded $4$-valent graphs in $\mathbb{R}^3$ extending an invariant for links in $\mathbb{R}^3$ called  the Kauffman polynomial \cite{kaufgraph}. In \cite{HN}, Henrich and the fourth author investigated singular knots in the context of virtual knot theory, flat virtual knot theory and flat singular virtual knot theory.  They introduced algebraic structures called semiquandles, singular semiquandles and virtual singular semiquandles.  They also gave an application to distinguishing Vassiliev-type invariants of virtual knots.  Other extensions of classical invariants of knots to singular knots can be found in the works of \cite{ St, Paris}. 
In this article we consider colorings of singular knots and links by certain algebraic structures.  As in the case of classical knot theory \cite{Prz} we show that the set of colorings is independent of the choice of the diagrams of a given singular knot or link making it an invariant of singular knots.  We show that this invariant is computable.  We then use it to distinguish many singular knots.

This article is organized as follows.  In section~\ref{sec2}, we review the basics of quandles and give examples.  Section~\ref{sec3} gives the definition of {\it singquandles} with examples and shows that the set of colorings of singular links by a singquandle is an invariant of singular links.  In section~\ref{sec4} we compute the coloring invariants for many singular links and use it to distinguish many singular links.  In section~\ref{sec5} we collect some open questions for future research.

\section{Acknowledgements}

The fourth listed author was partially supported by Simons Foundation Collaboration Grant 316709.

\section{Review of Quandles}\label{sec2}
We review the basics of quandle theory needed for this article.    Quandles are non-associative algebraic structures that correspond to the axiomatization of the three Reidemeister moves in knot theory. Since the early eighties when quandles were introduced by Joyce~\cite{Joyce} and Matveev~\cite{Matveev} independently, there has been a lot of interest in the theory, (see for example the book \cite{ EN} and the references therein). Joyce and Matveev proved that the {\it fundamental} quandle of a knot is a complete invariant up to orientation.  Precisely, given two knots $K_0$ and $K_1$, the fundamental quandle $Q(K_0)$ is isomorphic to the fundamental quandle $Q(K_1)$  if and only if $K_1$ is equivalent to $K_0$ or $K_1$ is equivalent to the reverse of the mirror image of $K_0$.  Quandles have been used by topologists to construct invariants of knots in $3$-space and knotted surfaces in $4$-space.  For example in \cite{HN}, Henrich and the fourth author investigated singular knots in the context of virtual knot theory. Their derived algebraic structure is called {\it virtual semiquandle} and it comes from some generalizations of Reidemeister moves for virtual knots.  They used it to construct invariants to distinguish generalized knots, with an application to distinguishing Vassiliev-type invariants of virtual knots.  Now we recall the basic definitions of quandles and give a few examples.

\begin{definition}
	\textup{A \textit{rack} is a set $X$ with a binary operation $*$ satisfying the following two axioms:}%
	\begin{list}{}{}
		\item[\textup{(i)}]{for all $x\in X$, the right multiplication  $ y \mapsto y * x$  by $x$ is a bijection,} \textup{and}
		\item[\textup{(ii)}]{$(x* y)* z=(x* z)*(y* z)$}.
	\end{list}
	\textup{A rack which further satisfies $x* x=x$ for all $x\in X$ is called a \textit{quandle}.}
\end{definition}

 A {\it quandle homomorphism} between two quandles $X, Y$ is
 a map $f: X \rightarrow Y$ such that $f(x* y)=f(x) * f(y) $.
 A {\it quandle isomorphism} is a bijective quandle homomorphism, and 
 two quandles are {\it isomorphic} if there is a quandle isomorphism 
 between them.  The right multiplication $R_x$ is the automorphism of $X$ given by $R_x(y)=y*x$.  Its inverse will be denoted by $R_x^{-1}(y):=y*^{-1}x$.  The set of all quandle isomorphisms of $X$ is a group denoted $Aut(X)$.  Its subgroup generated by all right multiplications $R_x$ is called the Inner group and denoted $Inn(X)$.  A quandle is called {\it involutive} if $(x*y)*y=x$ for all $x, y \in X$.  In other words $R_x=R_x^{-1}, \; \forall x \in X$.  
 
 Typical examples of quandles include the following. 
 \begin{itemize}
 	\setlength{\itemsep}{-.9pt}
 	\item
 	Any non-empty set $X$ with the operation $x* y=x$ for all $x,y \in X$ is
 	a quandle called the {\it trivial} quandle.
 	
 	\item
 	A group $X=G$ with
 	$n$-fold conjugation
 	as the quandle operation: $a * b=b^{-n} a b^n$.
 	
 	\item
 	A group $X=G$ with the binary operation: $a * b=ba^{-1} b$ is a quandle.  It is called the {\it core} quandle of the group $G$.
 	
 	\item
 	Let $n$ be a positive integer.
 	For  
 	$a, b \in \mathbb{Z}_n$ (integers modulo $n$), 
 	define
 	$a  *  b \equiv 2b-a \pmod{n}$.
 	Then $* $ defines a quandle
 	structure  called the {\it dihedral quandle},
 	$R_n$.
 	This set can be identified with  the
 	set of reflections of a regular $n$-gon
 	with conjugation
 	as the quandle operation.
 	\item
 	Any ${ \mathbb{Z} }[T, T^{-1}]$-module $M$
 	is a quandle with
 	$a * b=Ta+(1-T)b$, $a,b \in M$, called an {\it  Alexander  quandle}.
 \end{itemize}
 In standard knot theory, to distinguish knots using quandles, one only needs to consider {\it connected} quandles according to section 5.2 in \cite{Ohtsuki} (see also lemma 3.1 in \cite{CESY}).  A quandle is {\it connected} if for every $x, y \in X$, there exists a sequence of elements 
 $x_1, \ldots, x_n \in X$ for some positive integer $n$ and $\epsilon_1, \epsilon_2, \cdots \epsilon_n \in \{-1,1 \}$  such that 
 $( \cdots ( x *^{\epsilon_1} x_1 )*^{\epsilon_2} x_2 ) *^{\epsilon_3} \cdots ) \cdots *^{\epsilon_n} x_n ) =y$.  In other words the Inner group $Inn(X)$ acts transitively on the quandle $X$.

\begin{figure}[H]
  \centering
   {\includegraphics[scale=0.1]{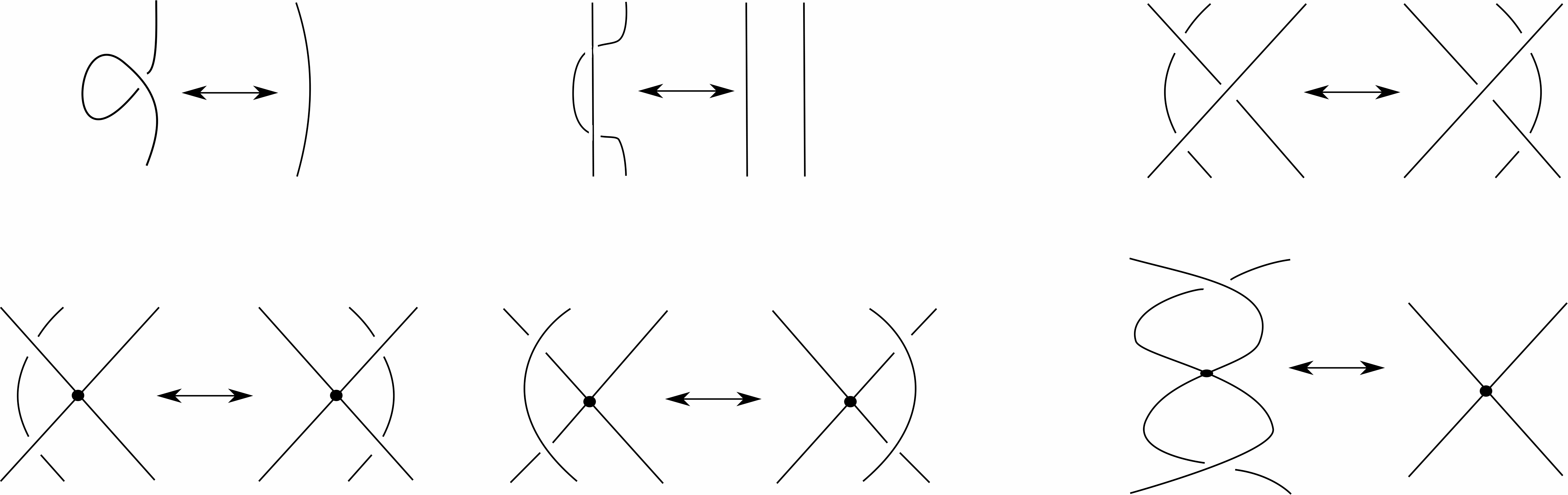}
   	\vspace{.2in}
     \caption{Classical Reidemeister moves $RI$, $RII$ and $RIII$ on the top and singular Reidemeister $RIVa$, $RIVb$ and $RV$ on the bottom.}
  \label{Rmoves}}
\end{figure}

\section{Singular Quandles}\label{sec3}
In this section we define the notion of {\it singquandles}, give some examples and use them to construct an invariant of singular knots and links.  The invariant is the set of colorings of a given singular knot or link by a singquandle.  The colorings of regular and singular crossings are given by the following figure~\ref{Rmoves}.

\begin{figure}[H]
	\tiny{
		\centering
		{\includegraphics[scale=0.25]{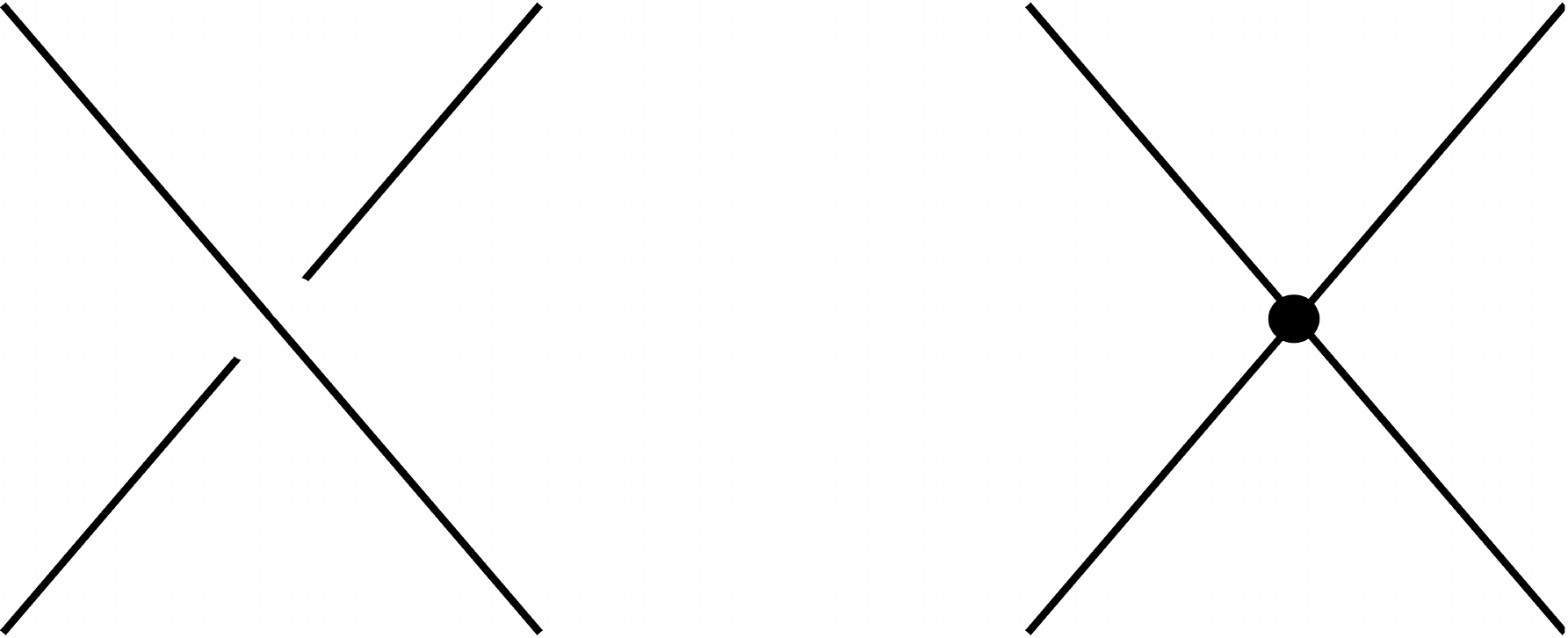}
			\put(-90,55){$y$}
			\put(-133,55){$x$}
			\put(-143,-10){$y*x$}
			\put(-90,-10){$x$}
			\put(-45,55){$x$}
			\put(-5,55){$y$}
			\put(-50,-10){$R_1(x,y)$}
			\put(-10,-10){$R_2(x,y)$}}
		\vspace{.2in}
		\caption{Regular and singular crossings}
		\label{Rmoves}}
\end{figure}

\begin{figure}[H]

  \centering
   {\includegraphics[scale=0.25]{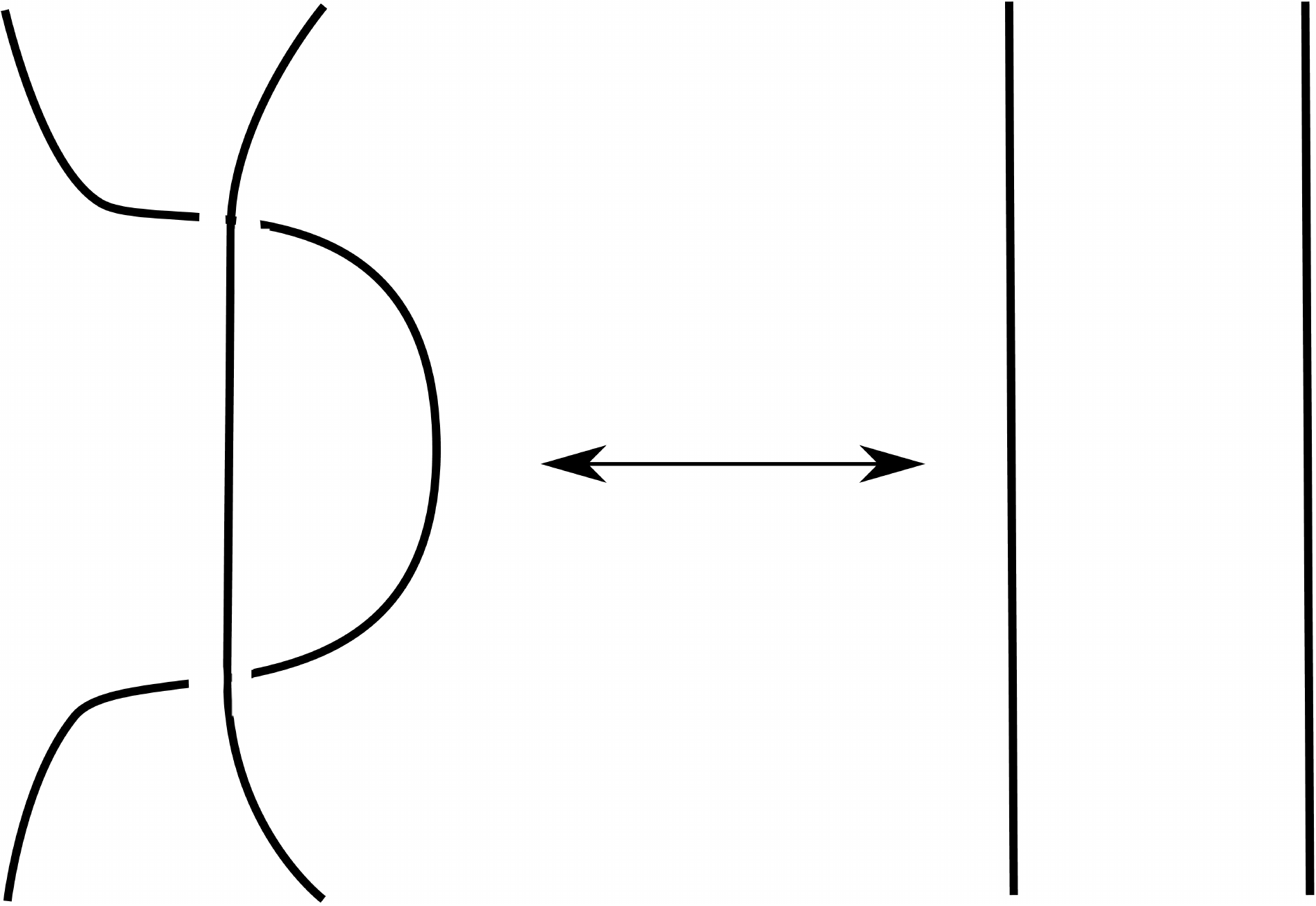}
	\put(-113,100){$y$}
    \put(-138,100){$x$}    
	\put(-3,100){$y$}
	\put(-3,-8){$y$}
	\put(-99,67){$x*y$}
	 \put(-178,-8){$(x*y)*y$}  
    \put(-33,100){$x$}    
    \put(-33,-8){$x$}
    \put(-125,50){$y$}
    \put(-108,-8){$y$}
    	\vspace{.2in}
     \caption{Reidemeister move II for regular crossings}}
\end{figure}

\begin{figure}[H]
\tiny{
  \centering
   {\includegraphics[scale=0.55]{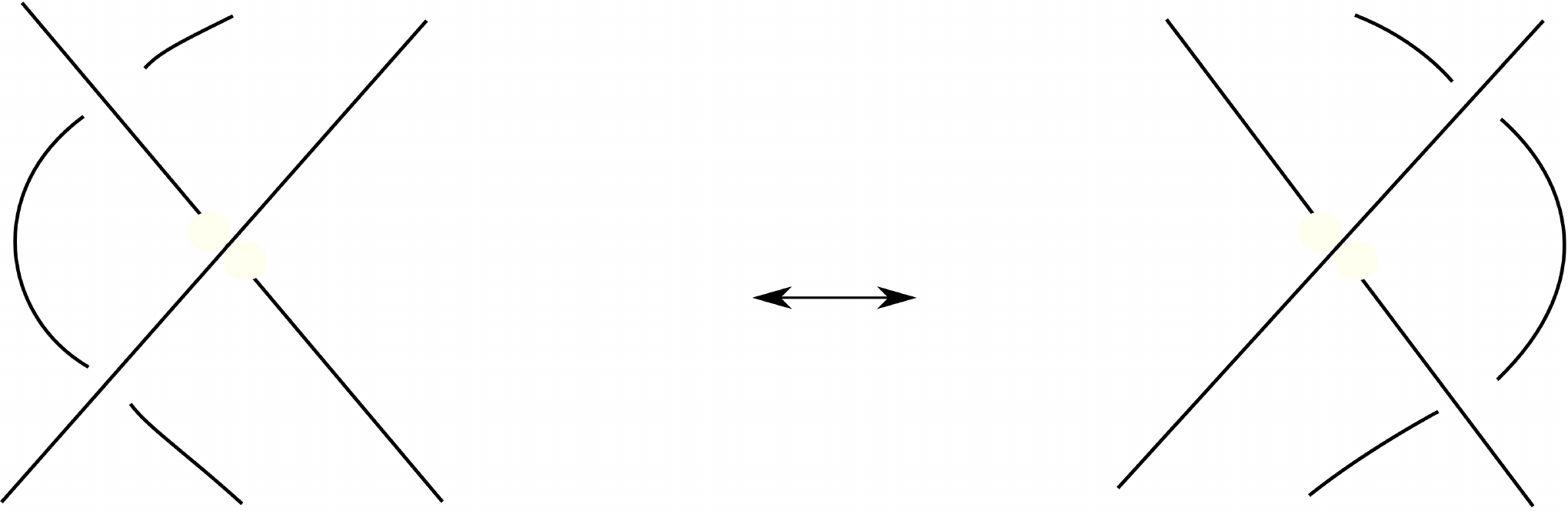}
    \put(-315,108){$x$}
    \put(-273,108){$y$}
    \put(-288,70){$x$}
    \put(-344,60){$y*x$}
    \put(5,60){$y*z$}
    \put(-339,-7){$z$}
    \put(-307,-7){$(y*x)*z$}
    \put(-235,-7){$x*z$}
    \put(-105,-7){$z$}
    \put(-85,-7){$(y*z)*(x*z)$}
    \put(-5,-7){$x*z$}
    \put(-238,108){$z$}
     \put(-83,108){$x$}
    \put(-38,108){$y$}
    \put(-5,108){$z$}
    }
    \vspace{.2in}
    \caption{Reidemeister move III for regular crossings}
    \label{}}
     
\end{figure}

\begin{figure}[H]
	\tiny{
		\centering
		{\includegraphics[scale=0.55]{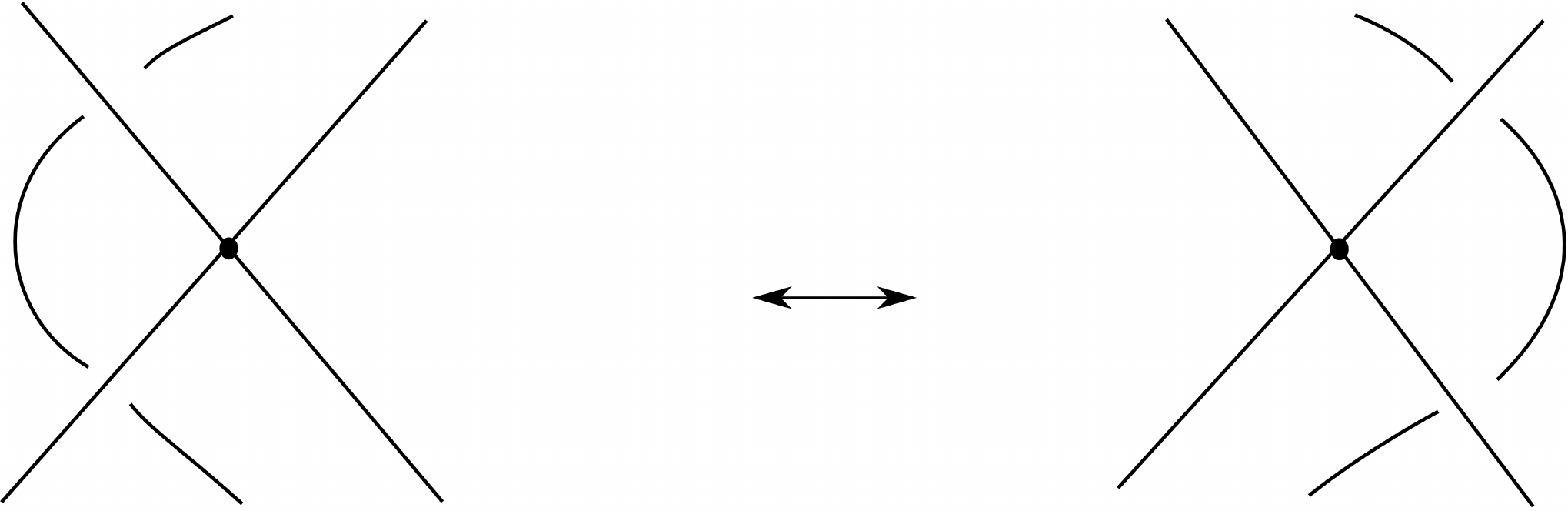}
			\put(-315,108){$x$}
			\put(-273,108){$y$}
			\put(-288,70){$x$}
			\put(-344,60){$y*x$}
			\put(5,60){$y*z$}
			\put(-359,-7){$R_1(x,z)$}
			\put(-307,-7){$(y*x)*R_1(x,z)$}
			\put(-235,-7){$R_2(x,z)$}
			\put(-125,-7){$R_1(x,z)$}
			\put(-85,-7){$(y*z)*R_2(x,z)$}
			\put(-5,-7){$R_2(x,z)$}
			\put(-238,108){$z$}
			\put(-83,108){$x$}
			\put(-38,108){$y$}
			\put(-5,108){$z$}
		}
		\vspace{.2in}
		\caption{The singular Reidemeister move RIVa and colorings }
		\label{The generalized Reidemeister move RIV}}
\end{figure}

\begin{figure}[H]
	\tiny{
		\centering
		{\includegraphics[scale=0.55]{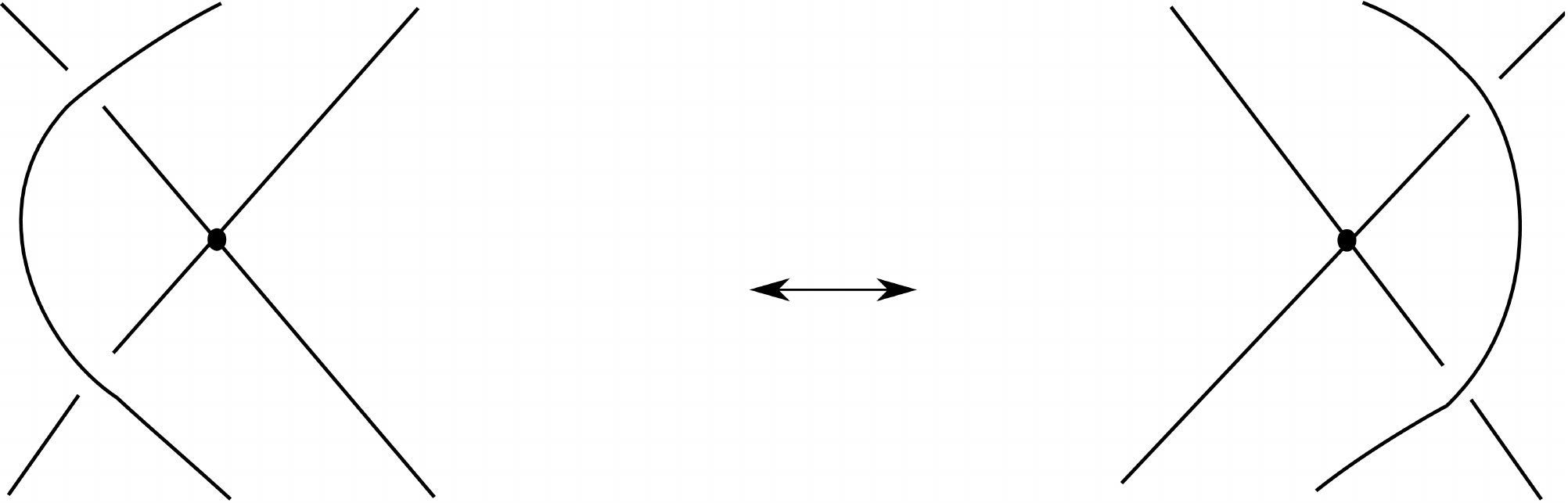}
			\put(-315,108){$x$}
			\put(-273,108){$y$}
			\put(-288,70){$x*y$}
			\put(-344,60){$y$}
			\put(5,60){$y$}
			\put(-359,-7){$R_1(x*y,z)*y$}
			\put(-270,-7){$y$}
			\put(-235,-7){$R_2(x*y,z)$}
			\put(-125,-7){$R_1(x,z*y)$}
			\put(-55,-7){$y$}
			\put(-5,-7){$R_2(x,z*y)*y$}
			\put(-238,108){$z$}
			\put(-83,108){$x$}
			\put(-38,108){$y$}
			\put(-5,108){$z$}
		}
		\vspace{.2in}
		\caption{The singular Reidemeister move RIVb and colorings }
		\label{The generalized Reidemeister move RIV}}
\end{figure}

\begin{figure}[H]
\tiny{
  \centering
   {\includegraphics[scale=0.35]{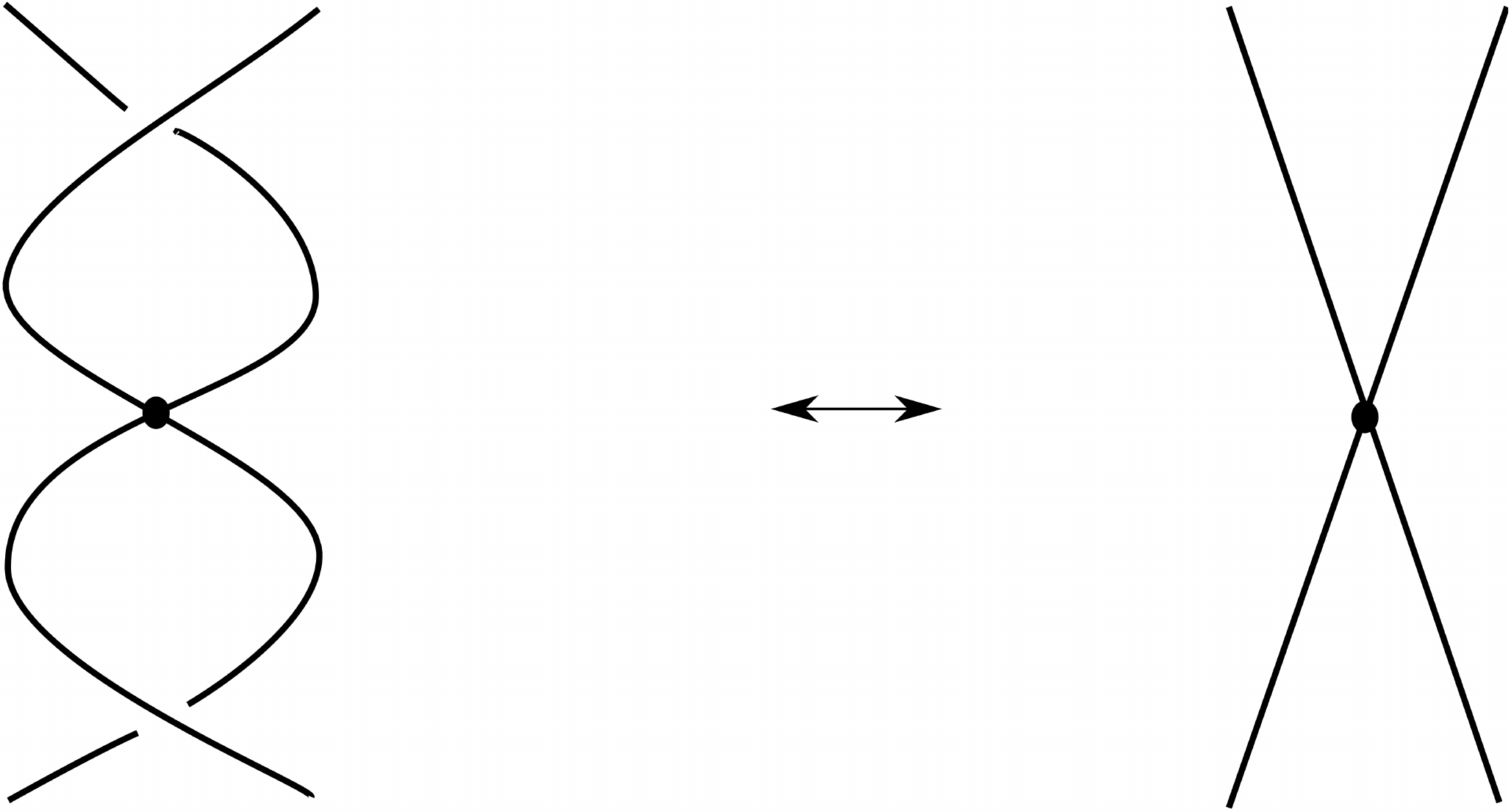}
	\put(-163,115){$y$}
    \put(-208,115){$x$}    
	\put(-3,115){$y$}
	\put(-3,-8){$R_2(x,y)$}
	\put(-218,67){$y$}
	\put(-158,67){$x*y$}
	\put(-255,27){$R_1(y,x*y)$}
	\put(-158,27){$R_2(y,x*y)$}
	 \put(-289,-8){$R_2(y,x*y)*R_1(y,x*y)$}  
    \put(-43,115){$x$}    
    \put(-73,-8){$R_1(x,y)$}
    \put(-168,-8){$R_1(y,x*y)$}}
     \vspace{.2in}
     \caption{The singular Reidemeister move RV and colorings}
     \label{The generalized Reidemeister move RV}}
\end{figure}

Since our singular crossings are unoriented, we need the operations to be symmetric in the sense that if we rotate the crossing in the right diagram of figure~\ref{Rmoves} by $90$, $180$ or $270$ degrees, the operations should stay the same in order for colorings to be well-defined.  Therefore we get the following three axioms:
\begin{eqnarray}
x &=& R_1(y,R_2(x,y)) = R_2(R_2(x,y),R_1(x,y)),\label{rotation1}\\
y &=& R_2(R_1(x,y),x) = R_1(R_2(x,y), R_1(x,y)), \label{rotation2}\\
(R1(x,y),R2(x,y))&=&(R_2(y,R_2(x,y)), R_1(R_1(x,y),x)).\label{rotation3}
\end{eqnarray}

The axioms of the following definition come from the generalized Reidemeister moves RIV and RV in the respective figures~\ref{The generalized Reidemeister move RIV} and~\ref{The generalized Reidemeister move RV}.

\begin{definition}\label{SingInvQdle}
	Let $(X, *)$ be an involutive quandle.  Let $R_1$ and $R_2$ be two maps from $X \times X$ to $X$.  The triple $(X, *, R_1, R_2)$ is called a {\it singquandle} if, in addition to the three axioms~\ref{rotation1}, ~\ref{rotation2} and~\ref{rotation3}, the following axioms are satisfied
	\begin{eqnarray}
		(y * z)* R_2(x, z) &=& (y*x)*R_1(x,z) \;\;\;\text{coming from RIVa} \label{eq1}\\
		R_1(x, y)                     & =&  R_2(y*x, x) \;\;\;\text{coming from RV}\label{eq2}\\
	      R_2(x,y)   &=& 	R_1(y*x, x)*R_2(y*x, x) \;\;\;\text{coming from RV} \label{eq3}\\
R_1(x*y,z)*y&=&R_1(x,z*y)  \;\;\;\text{coming from RIVb} \label{eq4}\\
R_2(x*y,z)&=&R_2(x,z*y)*y  \;\;\;\text{coming from RIVb} \label{eq5}	
\end{eqnarray}	
\end{definition}

As in the case of classical knot theory \cite{Prz}, the following straightforward lemma makes the set of colorings of a singular knot by singquandles an invariant of singular knots.

\begin{lemma}
	The set of colorings of a singular knot by a singquandle does not change by the moves RI, RII, RIII, $RIVa$, $RIVb$ and RV.
\end{lemma}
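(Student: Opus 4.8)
The plan is to verify, move by move, that applying any one of the six moves gives a bijection between the colorings of the diagram before the move and the colorings of the diagram after it. Recall that a coloring of a singular diagram by a singquandle $(X,*,R_1,R_2)$ assigns to each semiarc an element of $X$ subject to the local rules of Figure~\ref{Rmoves}: at a classical crossing an underarc colored $a$ crossing beneath an overarc colored $b$ emerges colored $a*b$, and at a singular crossing with incoming colors $x,y$ the two outgoing arcs receive the colors $R_1(x,y)$ and $R_2(x,y)$. Because a singquandle is in particular an involutive quandle and each right multiplication $R_b$ is a bijection, I will be able to solve these local equations in either direction, which is what supplies the inverse maps below.

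First I would reduce everything to a purely local statement. Each move is performed inside a small disk $D$, and the diagram together with its coloring is left unchanged outside $D$; in particular the colors of the finitely many arcs crossing $\partial D$ are identical before and after. Thus it suffices to fix those boundary colors and to check that the number of ways to extend the coloring over the interior of $D$ is the same for the two local tangles. Since any coloring of the exterior restricts to a boundary coloring common to both diagrams, these local bijections glue into a global bijection between the full coloring sets, and the lemma follows. Consequently the whole proof reduces to six local computations.

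For the three classical moves I would appeal directly to the quandle axioms. The move RI is handled by idempotency $x*x=x$, which holds because $X$ is a quandle. The move RII is handled by the involutive identity $(x*y)*y=x$ (that is, $R_x=R_x^{-1}$): reading the colors along the RII figure shows that the strand entering with color $x$ exits with color $(x*y)*y$, so the two boundary colorings agree exactly when this equals $x$. The move RIII is handled by right self-distributivity $(x*y)*z=(x*z)*(y*z)$: labeling the six arcs as in the RIII figure, the colors of the bottom three arcs computed on the two sides coincide precisely when this axiom holds. In each case the bijectivity of right multiplication provides the reverse direction.

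The substance of the argument is the three singular moves, where I would match the axioms of Definition~\ref{SingInvQdle} to the figures. Reading the bottom colors of the RIVa figure, the two sides of the move produce the same middle color exactly when $(y*z)*R_2(x,z)=(y*x)*R_1(x,z)$, i.e.\ axiom~\eqref{eq1}. For RIVb the two boundary colorings agree exactly when $R_1(x*y,z)*y=R_1(x,z*y)$ and $R_2(x*y,z)=R_2(x,z*y)*y$, i.e.\ axioms~\eqref{eq4} and~\eqref{eq5}. For RV (Figure~\ref{The generalized Reidemeister move RV}) the two resolutions match exactly when $R_1(x,y)=R_2(y*x,x)$ and $R_2(x,y)=R_1(y*x,x)*R_2(y*x,x)$, i.e.\ axioms~\eqref{eq2} and~\eqref{eq3}. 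Finally, since the singular crossings are unoriented, I must also check that the coloring rule at a singular crossing is unchanged under rotation by $90$, $180$, and $270$ degrees; this is exactly the content of the rotation axioms~\eqref{rotation1}, \eqref{rotation2}, and~\eqref{rotation3}, which guarantee that the rule is well defined to begin with. I expect the main obstacle to be bookkeeping rather than conceptual difficulty: in RIVa, RIVb, and RV the singular vertex and the adjacent classical crossing exchange relative position, so I must track carefully which semiarc on one side corresponds to which on the other, and confirm that after this identification the equality of boundary colors is equivalent to, and not merely implied by, the stated axiom, so that the correspondence is genuinely a bijection in both directions.
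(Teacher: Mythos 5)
Your proposal is correct and is exactly the argument the paper has in mind: the paper states this lemma without proof, calling it straightforward precisely because the singquandle axioms (\ref{rotation1})--(\ref{rotation3}) and (\ref{eq1})--(\ref{eq5}) were defined by reading off the coloring conditions from the figures for the moves, so your move-by-move matching of axioms to local boundary colorings, together with the standard reduction to a local bijection inside the disk where the move occurs, is the intended (and only natural) proof. Your added care about bijectivity in both directions and about the rotation axioms making the unoriented singular-crossing rule well defined fills in details the paper leaves implicit, but does not constitute a different approach.
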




We end this section with a class of singquandles generalizing the class of involutive Alexander quandles.

\begin{proposition}
Let $\Lambda=\mathbb{Z}[t,B]/(t^2-1,B(1+t),t-(1-B)^2)$  and let $X$ be a 
$\Lambda$-module. Then the operations 
\[x\ast y=tx+(1-t)y, \quad R_1(x,y)=(1-t-B)x+(t+B)y 
\quad \mathrm{and}\quad R_2(x,y)=(1-B)x + By\]
make $X$ an involutive singquandle we call an \textit{Alexander singquandle}.
\end{proposition}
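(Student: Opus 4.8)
The plan is a direct verification of all the singquandle axioms, exploiting that $X$ is a $\Lambda$-module, so $t$ and $B$ act as commuting endomorphisms of $X$ subject to the three relations $t^2=1$, $B(1+t)=0$ (equivalently $Bt=-B$) and $t=(1-B)^2$. The decisive structural observation is that each of the three operations is an \emph{affine} $\Lambda$-combination: in $x\ast y=tx+(1-t)y$, $R_1(x,y)=(1-t-B)x+(t+B)y$ and $R_2(x,y)=(1-B)x+By$ the two coefficients sum to $1$. Consequently every axiom, after substitution, becomes an identity between affine-$\Lambda$-linear expressions in the free generators $x,y,z$, and it holds on all of $X$ precisely when the coefficient of each generator matches on the two sides. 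Thus the whole proof reduces to a finite list of coefficient comparisons, i.e.\ polynomial identities in $\mathbb{Z}[t,B]$ to be checked modulo the defining ideal $(t^2-1,\,B(1+t),\,t-(1-B)^2)$. I would organize these into three groups.

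First I would confirm that $(X,\ast)$ is an involutive quandle. Idempotency $x\ast x=x$ is immediate since the coefficients sum to $1$; right self-distributivity $(x\ast y)\ast z=(x\ast z)\ast(y\ast z)$ holds because the $z$-contributions cancel via $t+(1-t)-1=0$ and needs no relation at all; bijectivity of $y\mapsto y\ast x$ follows from $t$ being invertible with $t^{-1}=t$; and involutivity $(x\ast y)\ast y=x$ follows from $(1-t)(1+t)=1-t^2=0$. So this group uses only $t^2=1$.

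Next I would dispatch the five defining identities \eqref{eq1}--\eqref{eq5}. Each is handled by expanding both sides, applying $t^2=1$ and $Bt=-B$, and reading off coefficients; for instance \eqref{eq2} reduces to $(1-B)(1-t)+B=1-t-B$, which holds exactly by $Bt=-B$, and \eqref{eq1}, \eqref{eq3}, \eqref{eq4}, \eqref{eq5} follow by the same mechanism. Notably this group never invokes the third relation $t=(1-B)^2$; the first two relations suffice.

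Finally, the symmetry axioms \eqref{rotation1}, \eqref{rotation2}, \eqref{rotation3} are where the remaining work — and the only genuine obstacle — lies, and where the third relation becomes essential. For example, the $x$-coefficient of $R_1(y,R_2(x,y))$ works out (using $Bt=-B$) to $(t+B)(1-B)=t+2B-B^2$, which equals $1$ exactly when $t=(1-B)^2$, while matching its $y$-coefficient to $0$ forces the same relation; this gives the first identity of \eqref{rotation1}, and the first identity of \eqref{rotation2} together with both identities of \eqref{rotation3} are verified identically. This makes transparent that the generator $t-(1-B)^2$ of the defining ideal was chosen precisely to enforce the rotational symmetry of the singular crossing. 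Since all three groups of identities hold in any $\Lambda$-module, $(X,\ast,R_1,R_2)$ is an involutive singquandle. The only care required throughout is bookkeeping: keeping the reductions $Bt=-B$ and $t=1-2B+B^2$ straight and checking both coefficient components of each vector identity.
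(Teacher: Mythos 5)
Your proposal is correct and takes essentially the same approach as the paper's own proof: a direct verification of every singquandle axiom by expanding the affine $\Lambda$-linear operations and reducing coefficients using $t^2=1$, $Bt=-B$, and $t+2B-B^2=1$. Your added bookkeeping --- that the involutive quandle axioms need only $t^2=1$, that axioms \eqref{eq1}--\eqref{eq5} need only the first two relations, and that the rotation axioms \eqref{rotation1}--\eqref{rotation3} are where $t-(1-B)^2$ becomes essential --- is a nice organizational refinement of the paper's computation, not a different argument.
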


\begin{proof}
It is straightforward to  verify that $\ast$ makes $X$ an involutive quandle.
We note that since $t^2=1$, we have
\[t(1-t)=t-t^2=t-1\quad\mathrm{and}\quad (1-t)^2=1-2t+t^2=2(1-t),\]
since $B(1+t)=0$, we have $Bt=-B$ and since
$t-(1-B)^2=0$ we have
\[1-t-2B+B^2=0\quad\mathrm{and}\quad t+2B-B^2=1.\]
We verify that our operations satisfy the remaining singquandle axioms. 
First, we compute
\begin{eqnarray*}
R_1(y,R_2(x,y)) 
& = & (1-t-B)y+(t+B)((1-B)x+By) \\
& = & (1-t-B)y+(t+B)(1-B)x+(t+B)By \\
& = & (1-t-B+tB+B^2)y+(t+B)(1-B)x \\
& = & (1-t-B+tB+B^2)y+(t+B-tB-B^2)x \\
& = & (1-t-2B+B^2)y+(t+2B-B^2)x \\
& = & 0y+1x=x
\end{eqnarray*}
and
\begin{eqnarray*}
R_2(R_2(x,y),R_1(x,y)) 
& = &  (1-B)((1-B)x+By)+B((1-t-B)x+(t+B)y)\\
& = &  ((1-B)^2+B(1-t-B))x+B(1-B+t+B)y\\
& = &  (t+B-tB-B^2)x+B(1+t)y\\
& = &  (t+2B-B^2)x+B(1+t)y\\
& = &  1x+0y=x\\
\end{eqnarray*}
and (4.1) is satisfied.

Next, we have
\begin{eqnarray*}
R_2(R_1(x,y),x) 
& = & (1-B)((1-t-B)x+(t+B)y)+Bx \\
& = & ((1-B)(1-t-B)+B)x+(1-B)(t+B)y \\
& = & (1-t-B-B+tB+B^2+B)x+(t+B-tB-B^2)y \\
& = & (1-t-2B+B^2)x+(t+2b-B^2)y \\
& = & 0x+1y=y \\
\end{eqnarray*}
and
\begin{eqnarray*}
R_1(R_2(x,y),R_1(x,y)) 
& = & (1-t-B)((1-B)x+By)+(t+B)((1-t-B)x+(t+B)y)\\
& = & (1-t-B)(1-B+t+B)x+ (B(1-t-B)+(t+B)^2)y\\
& = & (1-t-B)(1+t)x+ (B-tB-B^2+t^2+2tB+B^2)y\\
& = & (1-t-B+t-t^2-tB)x+ 1y\\
& = & 0x+ 1y\\
\end{eqnarray*}
and axiom (4.2) is satisfied.

Continuing, we have
\begin{eqnarray*}
(R_2(y,R_2(x,y)),R_1(R_1(x,y),x)) 
& = & ((1-B)y+B((1-B)x+By), \\
& & \quad\quad(1-t-B)((1-t-B)x+(t+B)y)+(t+B)x)\\
& = & ((1-B+B^2)y+B(1-B)x,\\
& & \quad \quad((1-t-B)^2+t+B)x+(1-t-B)(t+B)y)\\
& = & ((t+B)y+(B-B^2)x,\\
& & \quad\quad(1-t-B-t+t^2+tB-B+tB+B^2+t+B)x\\
& & \quad \quad +(t-t^2-tB+B-tB-B^2)y)\\
& = & ((1-t-B)x+(t+B)y,(1-B)x+By)\\
& = & R(x,y)
\end{eqnarray*}
as required by axiom (4.3).

Next, we have
\begin{eqnarray*}
(y\ast z)\ast R_2(x,z) 
& = & t(ty+(1-t)z)+(1-t)((1-B)x+Bz) \\
& = & (1-t)(1-B)x+t^2y+(1-t)(t+B)z \\
& = & (1-t)(t+1-t-B)x+t^2y+(1-t)(t+B)z\\
& = & t(ty+(1-t)x)+(1-t)((1-t-B)x+(t+B)z) \\
& = & (y\ast x) \ast R_1(x,z)
\end{eqnarray*}
and we have (4.4). Next, we have
\begin{eqnarray*}
R_1(x,y) 
& = & (1-t-B)x+(t+B)y\\
& = & (1-t-B+tB+B)x+(t-tB)y\\
& = & ((1-B)(1-t)+B)x+(1-B)ty\\
& = & (1-B)(ty+(1-t)x)+Bx \\
& = & R_2(y\ast x, x)
\end{eqnarray*}
and we have (4.5). Continuing, we have
\begin{eqnarray*}
R_2(x,y) 
& = & (1-B)x+By\\
& = & (1-B+(1-t-B)(t-1)+(1-t)(1-t-B))x+(1-t-B+t+B-1+B)y\\
& = & ((1-t-B)(t-1)+1-B+(1-t)(1-t-B))x+(1-t-B+t-tB-1+B)y\\
& = & ((1-t-B)(t-1)+t^2+tB+(1-t)(1-t-B+tB+B))x\\  
& & \quad \quad +(1-t-B+(t-1)(1-B))y\\
& = & (t(1-t-B)(1-t)+t(t+B)+(1-t)((1-t)(1-B)+B))x+(t^2(1-t-B)\\
& & \quad \quad +t(1-t)(1-B))y\\
& = & t((1-t-B)(ty+(1-t)x)+(t+B)x)+(1-t)((1-B)(ty+(1-t)x)+Bx) \\
& = & R_1(y\ast x,x)\ast R_2(y\ast x,x)
\end{eqnarray*}
and we have (4.6).

Next, we  have
\begin{eqnarray*}
R_1(x\ast y,z) * y 
& = & t((1-t-B)(tx+(1-t)y)+(t+B)z)+(1-t)y \\
& = & t^2(1-t-B)x+ t(t+B)z+(1-t)(t(1-t-B)+1)y\\
& = & t^2(1-t-B)x+ t(t+B)z+(1-t)(t-t^2-tB+1)y\\
& = & t^2(1-t-B)x+ t(t+B)z+(1-t)(t+B)y\\
& = & (1-t-B)x+(t+B)(tz+(1-t)y) \\
& = & R_1(x,z\ast y)
\end{eqnarray*}
as required by (4.7).

Lastly, we have
\begin{eqnarray*}
R_2(x\ast y,z)
& = & (1-B)(tx+(1-t)y)+Bz\\
& = & t(1-B)x+t^2Bz+(1-t)(1+tB)y \\
& = & t((1-B)x+B(tz+(1-t)y))+(1-t)y \\
& = & R_2(x,z\ast y)\ast y
\end{eqnarray*}
as required.
Hence, all axioms in Definition \ref{SingInvQdle} are also satisfied.  This completes the proof.
\end{proof}

\begin{example}
Any abelian group $A$ becomes an involutive Alexander singquandle by choosing 
an involutive automorphism $t:A\to A$ and another homomorphism $B:A\to A$
satisfying the conditions $B(x)=-Bt(x)$ and $t(x)=(x-B(x))^2$ for all $x\in A$.
More concretely, any commutative ring $R$ with identity becomes an involutive
Alexander singquandle by choosing elements $t,B\in R$ such that $t^2=1$,
$B(1+t)=0$ and $t-(1-B)^2=0$ and setting
\[x\ast y=tx+(1-t)y, \quad R_1(x,y)=(1-t-B)x+(t+B)y 
\quad \mathrm{and}\quad R_2(x,y)=(1-B)x + By.\]

For example, in $\mathbb{Z}_5$ with $t=4$ and $B=3$ we have $t^2=4^2=16=1$,
$B(1+t)=3(1+4)=15=0$ and $t-(1-B)^2=4-(1-3)^2=4-4=0$, so $X=\mathbb{Z}_5$ is
an involutive Alexander singquandle with 
\[x\ast y=4x+2y, \quad R_1(x,y)=4x+2y \quad \mathrm{and}\quad R_2(x,y)=3x+3y.\]
\end{example}

\section{Applications}\label{sec5}

Let $X$ be a $\Lambda$-module with the operations 
\[\begin{array}{rcl}
x\ast y & = & tx+(1-t)y\\ 
R_1(x,y)& = & (1-t-B)x+(t+B)y \quad \mathrm{and} \\
R_2(x,y)& = & (1-B)x + By
\end{array}\]
where $\Lambda=\mathbb{Z}[t,B]/(t^2-1,B(1+t),t-(1-B)^2)$. 

\begin{example}

	\begin{figure}[H]
\tiny{
  \centering
   {\includegraphics[scale=0.6]{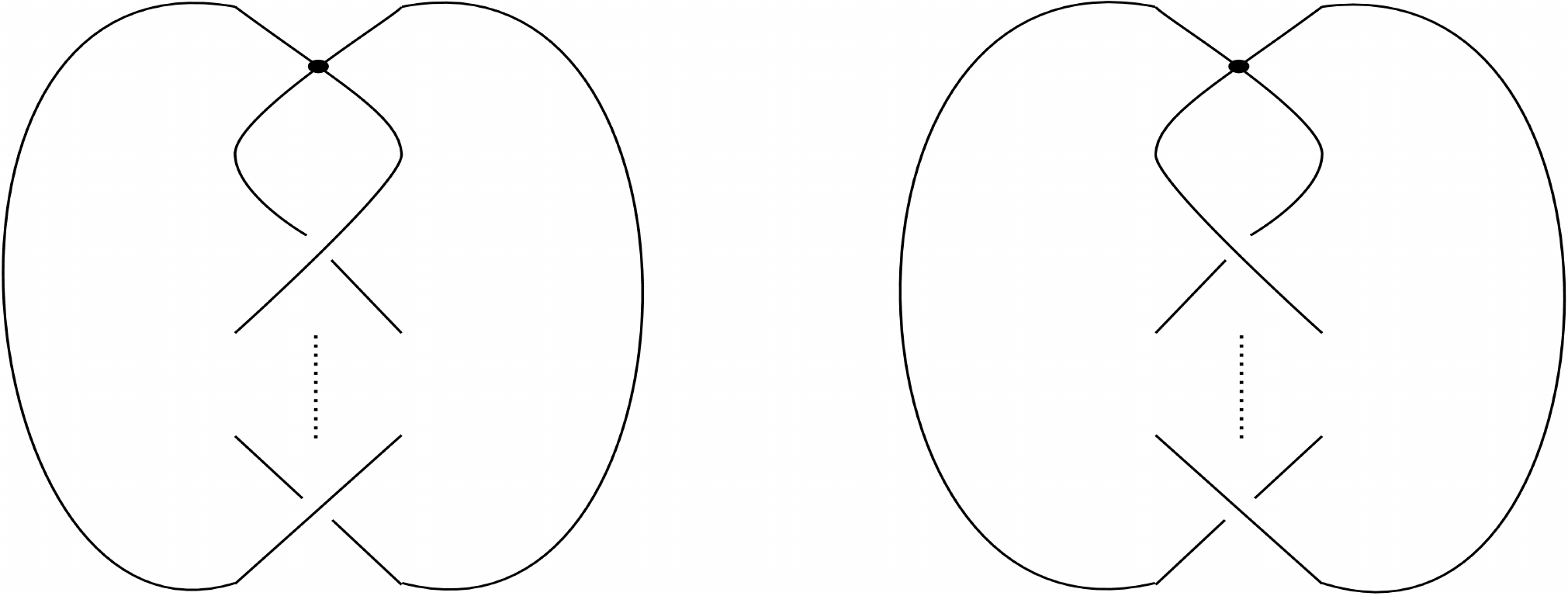}
  }
  \put(-125,130){$x$}
  \put(-20,130){$y$}
    \put(-325,134){$x$}
  \put(-240,134){$y$}
  \vspace{.2in}
     \caption{One singular crossing followed by $(n+1)$ regular crossings}
  \label{singular torus}}
\end{figure}	  
We color the top two strands of the knot on the left side of Figure~\ref{singular torus} and assume that the integer $n=2k$ is even.

For the knot on the left then the condition of colorability gives the following system of equations 
\begin{eqnarray*}
x & = &  (-k+kt)R_1(x,y)+ (k+1-kt)R_2(x,y)\\
y & =& [-k +(k+1)t]R_1(x,y) + [k+1-(k+1)t]R_2(x,y).
\end{eqnarray*}
This system of equations simplifies to 
\begin{eqnarray*}
        (1-B)^2(x-y) & = & 0\\
(-kt+B+k)(x-y) & = & 0.
\end{eqnarray*}
thus forcing $x=y$ and consequently in this case the set of colorings is the diagonal inside $X \times X$.

For the knot on the right we also assume $n=2k$. Then the condition of colorability gives the following system of equations
\begin{eqnarray*}
x & = &  (k-kt)R_1(x,y)+ (1-k+kt)R_2(x,y)\\
y & = & [k+1-kt]R_1(x,y) + [-k+kt]R_2(x,y).
\end{eqnarray*}
which simplifies to 
\begin{eqnarray*}
(-k+kt+B)(x-y)& = & 0\\
(-1+k+t-k+B)(x-y)& = & 0.
\end{eqnarray*} implying that  
\begin{eqnarray*}
-k+kt+B & = & 0\\
-1+k+t-k+B & = & 0.
\end{eqnarray*}  
Now by choosing $t=1$, one obtains the single equation $B(x-y)=0$.  Since $B$ doesn't have to be invertible, a right choice of a zero divisor value for $B$ will give that the coloring space contains the diagonal of $X \times X$ as a proper subset and thus the two links will be distinguished by the coloring sets.

\end{example}

\begin{example}
	
				\begin{figure}[H]
\small{
  \centering
   {\includegraphics[scale=0.6]{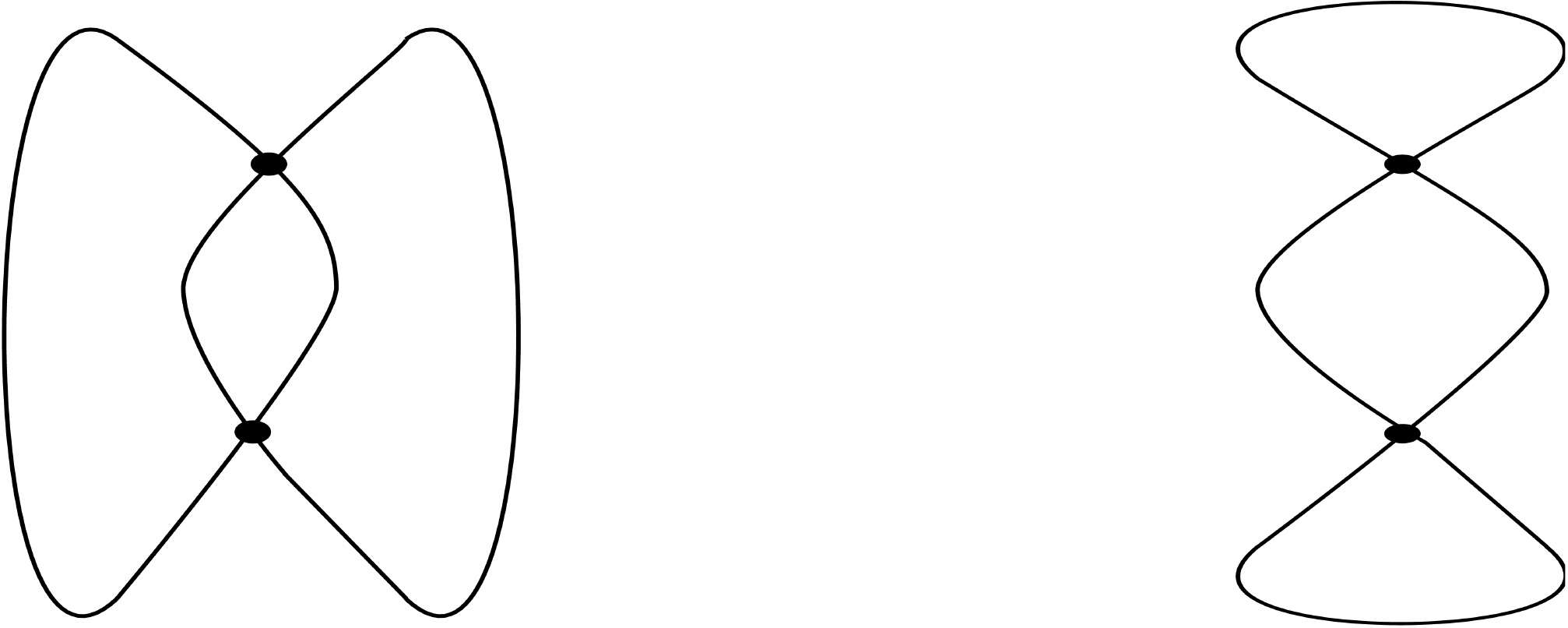}
  }
  \put(-85,80){$x$} 
	\put(-3,80){$x$}
	\put(-30,-10){$x$}
	\put(-30,140){$x$}
	\put(-269,137){$y$} 
	\put(-340,137){$x$}
	\vspace{.2in}
     \caption{Two singular knots each with two singular crossings}
  \label{Hopf}}
\end{figure}

Consider the knot on the left of Figure~\ref{Hopf} and color the two top arcs by elements $x$ and $y$.  By writing the relations at the crossings we get the two equations 
\begin{eqnarray*}
x & =& R_1(R_1(x,y),R_2(x,y))\quad \mathrm{and}\\ 
y & = & R_2(R_1(x,y),R_2(x,y)).
\end{eqnarray*}
We thus obtain after simplification $(t-1+2B)(y-x)=0$. 
Thus the set of all colorings of the knot on the left is \[\{(x,y) \in X \times X\ |\ (t-1+2B)(y-x)=0\}.\] 
It is clear that the knot on the right of Figure \ref{Hopf} colors trivially by the whole quandle. By choosing $\mathbb{Z}_{10}[t,B]$ with $t=-1$ and $B=4$, thus the coloring invariant distinguishes these two singular knots. 
				
			\end{example}

\begin{example}
For our final example, we computed singquandle colorings for certain singular
knots known as \textit{two-bouquet graphs} using the singquandle structure
on the set $X=\{1,2,3,4,5\}$ specified by the following operation tables
where $R_1(x,y)=R_2(x,y)=x\ast' y$:
\[
\begin{array}{r|rrrrr}
\ast & 1 & 2 & 3 & 4 & 5 \\ \hline
1 & 1 & 3 & 5 &  2 & 4 \\
2 & 5 & 2 & 4 & 1 & 3 \\
3 & 4 & 1 & 3 & 5 & 2 \\
4 & 3 & 5 & 2 & 4 & 1 \\
5 & 2 & 4 & 1 & 3 & 5 
\end{array}\quad
\begin{array}{r|rrrrl}
\ast' & 1 & 2 & 3 & 4 & 5 \\ \hline
1 & 1 & 4 & 2 & 5 & 3 \\
2 & 4 & 2 & 5 & 3 & 1 \\
3 & 2 & 5 & 3 & 1 & 4 \\
4 & 5 & 3 & 1 & 4 & 2 \\
5 & 3 & 1 & 4 & 2 & 5.
\end{array}
\]
Then the singular knot on the left has five colorings by $X$
while the one on the right has 25:
\[\includegraphics{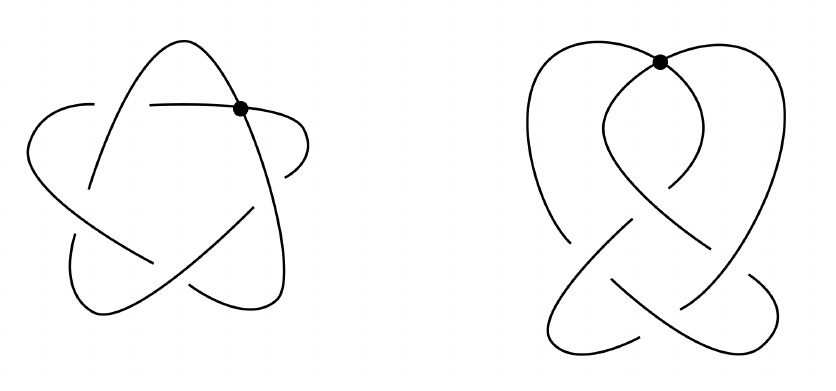}\]
\end{example}

\end{document}